\theoremstyle{plain} {
  \newtheorem{thm}{Theorem}[subsection]
  \newtheorem{cor}[thm]{Corollary}

  \newtheorem{clm}[thm]{Claim}

}
\theoremstyle{definition}
{
  \newtheorem{defn}[thm]{Definition}
  \newtheorem{rem}[thm]{Remark}
  \newtheorem{example}[thm]{Example}
}
\renewcommand{\subsubsection}{\sssection\rm}
\newcommand{\id}{\mathrm{id}}
\newcommand{\Spec}{\mathrm{Spec}}
\newcommand{\Hom}{\mathrm{Hom}}
\newcommand{\Th}{\mathrm{Th}}
\newcommand{\Sm}{\mathcal Sm}
\newcommand{\Aff}{\mathbf {A}}
\newcommand{\Pro}{\mathbf {P}}
\newcommand{\Gr}{\mathrm{Gr}}
\newcommand{\F}{\mathbf {F}}
\let\minus=\smallsetminus
\newcommand{\SH}{\mathrm{SH}}
\newcommand{\cm}{\mathrm{cm}}
\newcommand \xra {\xrightarrow }
\newcommand \hra {\hookrightarrow }
\newcommand{\MSS}{\mathbf{MSS}}
\newcommand{\sSet}{\mathbf{sSet}}
\newcommand{\colim}{\operatornamewithlimits{colim}}
\begin{document}

\title
{A universality theorem for Voevodsky's algebraic cobordism spectrum}

\author{I.~Panin\footnote{Universit\"at Bielefeld, SFB 701, Bielefeld, Germany}
\footnote{Steklov Institute of Mathematics at St.~Petersburg, Russia}
\and K.~Pimenov\footnotemark[2]
\and O.~R{\"o}ndigs\footnote{Institut f\"ur Mathematik, Universit\"at Osnabr\"uck, Osnabr\"uck, Germany}}

\date{September 26, 2007\thanks{The authors thank
the SFB-701 at the Universit\"at Bielefeld, the RTN-Network HPRN-CT-2002-00287, the
RFFI-grant 03-01-00633a, and
INTAS-05-1000008-8118 for their support.}}

\maketitle

\begin{abstract}
An algebraic version of a theorem due to Quillen is proved. More precisely,
for a ground field $k$ we consider the motivic stable homotopy category
$\mathrm{SH}(k)$ of $\Pro^1$-spectra,
equipped with the symmetric monoidal structure described in
\cite{BGL}.
The algebraic cobordism $\Pro^1$-spectrum $\mathrm{MGL}$ is considered as a commutative
monoid equipped with a canonical orientation
$th^{\mathrm{MGL}} \in \mathrm{MGL}^{2,1}(\Th(\mathcal O(-1)))$.
For a commutative monoid $E$ in the category
$\mathrm{SH}(k)$
it is proved that assignment
$\varphi \mapsto \varphi(th^{\mathrm{MGL}})$
identifies the set of monoid homomorphisms
$\varphi\colon \mathrm{MGL} \to E$
in the motivic stable homotopy category
$\mathrm{SH}(k)$
with the set of all orientations of $E$.
This result was stated originally in a slightly different form by G.~Vezzosi in
\cite{Vez}.
\end{abstract}

\section{Introduction}

Quillen proved in \cite{Quillen:mu} that the formal group law associated to the complex cobordism
spectrum $\mathrm{MU}$
is the universal one on the Lazard ring. As a consequence, the set of orientations on a
commutative ring spectrum $E$ in the stable homotopy category is in bijective correspondence with the
set of homomorphisms of ring spectra from $\mathrm{MU}$ to $E$ in the stable homotopy category.
This result allowed a whole new approach to understanding the stable homotopy category, which
is still actively pursued today.

On the algebraic side of things, there is a similar $\Pro^1$-ring spectrum $\mathrm{MGL}$ in
the motivic stable homotopy category of a field $k$. The formal group law associated to 
$\mathrm{MGL}$ is not known to be the universal one, although unpublished work of Hopkins and
Morel claims this if $k$ has characteristic zero. Nevertheless, the set of orientations
on a $\Pro^1$-ring spectrum in the motivic stable homotopy category over $k$ can be identified
in the same fashion. 

\begin{thm}\label{thm:preliminaries}
  Let $E$ be a commutative $\Pro^1$-ring spectrum over $k$. The set of orientations
  on $E$ is in bijection with the set of homomorphisms of $\Pro^1$-ring spectra
  from $\mathrm{MGL}$ to $E$ in the motivic stable homotopy category over $k$.
\end{thm}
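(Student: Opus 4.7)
The plan is to exhibit an explicit inverse to the assignment $\varphi \mapsto \varphi(th^{\MGL})$. The forward direction is essentially formal: if $\varphi\colon \MGL \to E$ is a monoid homomorphism in $\SH(k)$, then by naturality $\varphi(th^{\MGL})$ satisfies the same normalization as $th^{\MGL}$, hence is an orientation on $E$. The substance of the theorem is to construct, from an orientation $\omega \in E^{2,1}(\Th(\mathcal O(-1)))$, a monoid homomorphism $\varphi_\omega\colon \MGL \to E$ with $\varphi_\omega(th^{\MGL}) = \omega$, and to show $\varphi_\omega = \varphi$ whenever $\omega = \varphi(th^{\MGL})$.

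The construction of $\varphi_\omega$ proceeds in three stages. First, extend $\omega$ to a coherent system of Thom classes $th^E(V) \in E^{2\rk V, \rk V}(\Th V)$ for every vector bundle $V$ on every smooth $k$-scheme, satisfying functoriality, normalization on the trivial line bundle, and the Whitney sum formula $th^E(V \oplus W) = th^E(V) \wedge th^E(W)$. This is the standard theory of oriented $\Pro^1$-spectra, built via the projective bundle theorem and the splitting principle from the orientation $\omega$. Second, apply this to the tautological rank-$n$ bundle $\gamma_n$ on $\Gr(n,\infty)$ to obtain classes $th^E_n \in E^{2n,n}(\Th(\gamma_n))$, equivalently maps of pointed motivic spaces $\varphi_{\omega,n}\colon \Th(\gamma_n) \to \Sigma^{2n,n} E$. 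Third, verify that these are compatible with the bonding maps of the $\Pro^1$-prespectrum $\MGL$, which unwinds to the Whitney sum relation identifying the restriction of $\gamma_{n+1}$ to $\Gr(n,\infty)$ with $\gamma_n \oplus \mathcal O_{\Gr(n,\infty)}$.

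The main obstacle is assembling the compatible family $\{\varphi_{\omega,n}\}$ into an actual morphism $\varphi_\omega\colon \MGL \to E$ in $\SH(k)$. After passing to a fibrant replacement of $E$, this amounts to controlling a Milnor-type $\lim^1$ term in the tower computing $[\MGL, E]_{\SH(k)}$. The required vanishing follows from the Thom isomorphism $E^{**}(\Th(\gamma_n)) \simeq E^{**}(\Gr(n,\infty))$ combined with the Mittag-Leffler property of the tower $\{E^{**}(\Gr(n,\infty))\}$, which is a formal power series calculation in Chern roots once the orientation is fixed. A further check, more routine but still essential, shows that $\varphi_\omega$ is a homomorphism of commutative monoids: unitality is the normalization axiom, and multiplicativity reduces to the Whitney sum formula evaluated on the block-diagonal maps $\Gr(m,\infty) \times \Gr(n,\infty) \to \Gr(m+n,\infty)$ that define the multiplication on $\MGL$.

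Finally, the two assignments are mutually inverse. The identity $\varphi_\omega(th^{\MGL}) = \omega$ is seen by restricting to $n=1$ and invoking naturality of the extension procedure. Conversely, $\varphi_{\varphi(th^{\MGL})} = \varphi$ follows because any monoid homomorphism $\MGL \to E$ is determined by its values on the universal Thom classes $th^{\MGL}_n \in \MGL^{2n,n}(\Th(\gamma_n))$, which by the Whitney sum argument applied to $\MGL$ itself are determined by the line bundle case $n=1$; hence $\varphi$ and $\varphi_{\varphi(th^{\MGL})}$ agree levelwise, and agreement in $\SH(k)$ follows again from the same $\lim^1$-vanishing.
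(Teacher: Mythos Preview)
Your proposal is correct and follows essentially the same route as the paper: build Thom classes from the orientation, use them on the tautological bundles $\mathcal{T}(n)$ to get a compatible family, assemble via the $\varprojlim{}^1$-vanishing coming from the power-series computation of $E^{*,*}(\Gr(n))$, and verify the monoid structure and the two inverse identities via multiplicativity of Thom classes. One point you underplay: showing $\mu_E\circ(\varphi\wedge\varphi)=\varphi\circ\mu_{\mathrm{MGL}}$ is an equality in $E^{0,0}(\mathrm{MGL}\wedge\mathrm{MGL})$, so you need a second $\varprojlim{}^1$-vanishing for the tower $E^{*,*}(\Th(\mathcal{T}(i))\wedge\Th(\mathcal{T}(i)))$, not just the one for $E^{*,*}(\mathrm{MGL})$; the paper isolates this as a separate claim, and your ``routine'' multiplicativity check should invoke it explicitly.
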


For a more detailed formulation, see~\ref{UniversalityThm}.
Our main motivation to write this paper was to prove the universality 
theorem~\ref{thm:preliminaries}
in a form convenient for its application in
\cite{CV}. Theorem~\ref{thm:preliminaries} was stated originally in a slightly different
form by G.~Vezzosi in \cite{Vez}, although he ignored certain aspects of 
the multiplicative structure on
$\mathrm{MGL}$.

\subsection{Preliminaries}

We refer to
\cite[Appendix]{BGL}
for the basic terminology, notation, constructions, definitions,
results. For the convenience of the reader we recall the
basic definitions. Let $S$ be a Noetherian scheme of
finite Krull dimension.
One may think of $S$ being the spectrum of a field or the integers.
Let $\Sm/S$ be the category of smooth
quasi-projective $S$-schemes, and let $\sSet$ be the category of
simplicial sets. A {\em motivic space over\/} $S$ is a functor
\[ A\colon \Sm/S^\mathrm{op} \to \sSet \]
(see \cite[A.1.1]{BGL}).
The category of motivic spaces over $S$ is denoted
$\mathbf{M}(S)$. This definition of a motivic space is different from the one considered
by Morel and Voevodsky in \cite{MV} -- they consider only those simplicial presheaves
which are sheaves in the Nisnevich topology on $\Sm/S$. With our definition the
Thomason-Trobaugh $K$-theory functor obtained by using big vector bundles is a motivic space
on the nose. It is not a simplicial Nisnevich sheaf. This is why we prefer to work with the
above notion of ``space''.

We write
$\mathrm{H}^{\cm}_\bullet (S)$
for the pointed motivic homotopy category
and
$\mathrm{SH}^\cm(S)$
for
the stable motivic homotopy category over $S$ as constructed in \cite[A.3.9, A.5.6]{BGL}.
By \cite[A.3.11 resp.~A.5.6]{BGL} there are canonical equivalences to
$\mathrm{H}_\bullet(S)$ of \cite{MV}
resp. $\mathrm{SH}(S)$ of \cite{V1}.
Both $\mathrm{H}^\cm_\bullet(S)$ and $\mathrm{SH}^\cm_\bullet(S)$  are equipped with
closed symmetric monoidal structures such that the $\Pro^1$-suspension spectrum functor
is a strict symmetric monoidal functor
\[ \Sigma^\infty_{\Pro^1} \colon \mathrm{H}^\cm_\bullet(S)\to \mathrm{SH}^\cm(S). \]
Here
$\Pro^1$
is considered as a motivic space pointed by
$\infty \in \Pro^1$.
The symmetric monoidal structure $(\wedge,\mathbb{I}_S = \Sigma^\infty_{\Pro^1}S_+)$
on the homotopy category
$\mathrm{SH}^\cm(S)$ is constructed on the model category level by
employing symmetric $\Pro^1$-spectra.
It satisfies the properties required by
Theorem 5.6 of Voevodsky congress talk
\cite{V1}. From now on we will usually omit the superscript $(-)^\cm$.

Every $\Pro^1$-spectrum $E = (E_0,E_1,\dotsc)$ represents a cohomology theory on the category
of pointed motivic spaces. Namely, for a pointed motivic space $(A,a)$ set
\[E^{p,q}(A,a)=\Hom_{\mathrm{SH}_\bullet (S)}(\Sigma^\infty_{\Pro^1}(A,a), \Sigma^{p,q}(E))\]
and
$E^{\ast,\ast}(A,a)= \oplus_{p,q} E^{p,q}(A,a)$.
This definition extends to motivic spaces via the functor $A\mapsto A_+$ which
adds a disjoint basepoint.
That is, for a non-pointed motivic space
$A$ set
$E^{p,q}(A)=E^{p,q}(A_+,+)$
and
$E^{\ast,\ast}(A)= \oplus_{p,q} E^{p,q}(A)$.
Recall that there is a canonical element in $E^{2n,n}(E_n)$, denoted as
$\Sigma^\infty_{\Pro^1}E_n(-n) \to E$. It is represented by
the canonical map $(\ast,\dotsc,\ast,E_n,E_n\wedge \Pro^1,\dotsc) \to (E_0,E_1,\dotsc,E_n,\dotsc)$
of $\Pro^1$-spectra.

Every
$X \in \Sm/S$
defines a representable motivic space constant in the simplicial direction
taking an $S$-smooth scheme $U$ to
$\mathrm{Hom}_{\Sm/S}(U,X)$. It is not possible in general to choose
a basepoint for representable motivic spaces. 
So we regard $S$-smooth varieties as motivic spaces (non-pointed)
and set
$$
E^{p,q}(X)=E^{p,q}(X_+,+).
$$

Given a $\Pro^1$-spectrum $E$ we will reduce the double grading on the cohomology
theory
$E^{\ast,\ast}$
to a grading. Namely, set
$E^m = \oplus_{m = p-2q}E^{p,q}$
and
$E^{\ast}=\oplus_{m}E^m$.
{\it We often write\/}
$E^{\ast}(k)$
{\it for}
$E^{\ast}(\mathrm{Spec}(k))$
{\it below}.

To complete this section, note that for us a
$\Pro^1$-{\em ring spectrum\/} is a monoid
$(E,\mu,e)$
in
$(\mathrm{SH(S)},\wedge, \mathbb{I}_S)$.
A {\em commutative\/} $\Pro^1$-ring
spectrum is a commutative monoid
$(E,\mu,e)$
in
$(\mathrm{SH(S)},\wedge, 1)$.
The cohomology theory $E^{\ast}$ defined by a
$\Pro^1$-ring spectrum is a ring cohomology theory.
The cohomology theory $E^{\ast}$ defined by a commutative
$\Pro^1$-ring spectrum is a ring cohomology theory,
however it is not necessary graded commutative.
The cohomology theory $E^{\ast}$ defined by an oriented commutative
$\Pro^1$-ring spectrum is a graded commutative ring cohomology theory
\cite{PY}.

\subsection{Oriented commutative ring spectra}
Following Adams and Morel we define an orientation of a commutative
$\Pro^1$-ring spectrum. However we prefer to use Thom classes instead of
Chern classes. Consider the pointed motivic space
$\Pro^{\infty}= \colim_{n\geq 0} \Pro^n$ having base point
$g_1 \colon S = \Pro^0 \hra \Pro^\infty$.

The tautological ``vector bundle''
$\mathcal{T}(1) = \mathcal{O}_{\Pro^\infty}(-1)$ is also known as the
Hopf bundle. It has zero section $z\colon \Pro^\infty \hra \mathcal{T}(1)$.
The fiber over the point
$g_1 \in \Pro^{\infty}$
is
$\mathbb{A}^1$.
For a vector bundle $V$ over a smooth
$S$-scheme $X$, with zero section $z\colon X \hra V$,
its {\em Thom space\/} $\Th(V)$
is
the Nisnevich sheaf associated to
the presheaf
$Y \mapsto V(Y)/\bigl(V\minus z(X)\bigr)(Y)$
on the Nisnevich site
$\Sm/S$.
In particular, $\Th(V)$ is a pointed motivic space in the
sense of
\cite[Defn.~A.1.1]{BGL}.
It coincides with Voevodsky's Thom space \cite[p. 422]{V1},
since
$\Th(V)$
already is a Nisnevich sheaf.
The Thom space of the Hopf bundle is then defined as the colimit
$\Th(\mathcal{T}(1)) = \colim_{n\geq 0} \Th\bigl(\mathcal{O}_{\Pro^n}(-1)\bigr)$.
Abbreviate $T =\Th(\Aff^1_S)$.

Let $E$ be a commutative $\Pro^1$-ring spectrum. The unit gives rise
to an element
$1\in E^{0,0}(\Spec(k)_+)$.
Applying the
$\Pro^1$-suspension
isomorphism to that element we get an element
$\Sigma_{\Pro^1}(1) \in E^{2,1}(\Pro^1,\infty)$.
The canonical covering of $\Pro^1$ defines
motivic weak equivalences
\[ \xymatrix{\Pro^1 \ar[r]^-\sim & \Pro^1/\Aff^1 &
\Aff^1/\Aff^1\minus \lbrace 0 \rbrace = T \ar[l]_-\sim} \]
of pointed motivic spaces inducing isomorphisms
$E(\Pro^1,\infty) \leftarrow E(\Aff^1/\Aff^1\minus \lbrace 0 \rbrace) \rightarrow  E(T)$.
Let
$\Sigma_{T}(1)$
be the image of
$\Sigma_{\Pro^1}(1)$
in
$E^{2,1}(T)$.

\begin{defn}
\label{OrientationViaThom}
Let $E$ be a commutative $\Pro^1$-ring spectrum.
A {\em Thom orientation of\/}
$E$ is an element
$th \in E^{2,1}(\Th(\mathcal T(1))$
such that its restriction to the Thom space of the fibre over the distinguished
point coincides with the element
$\Sigma_{T}(1) \in E^{2,1}(T)$.
A {\em Chern orientation of\/}
$E$ is an element
$c \in E^{2,1}(\Pro^{\infty})$
such that
$c|_{\Pro^1}= - \Sigma_{\Pro^1}(1)$.
{\em An orientation} of $E$ is either a Thom orientation or a Chern orientation.
One says that a Thom orientation $th$ of $E$ coincides
with a Chern orientation $c$ of $E$ provided that
$c = z^*(th)$ or equivalently
the element
$th$ coincides with the one
$th(\mathcal O(-1))$
given by
(\ref{ThomClass})
below.
\end{defn}


\begin{rem}
\label{ThomAndChern}
The element $th$ should be regarded as the Thom class of the tautological
line bundle
$\mathcal{T}(1)= \mathcal O(-1)$
over
$\Pro^{\infty}$.
The element $c$ should be regarded as the Chern class of the tautological
line bundle
$\mathcal{T}(1)= \mathcal O(-1)$
over
$\Pro^{\infty}$.
\end{rem}


\begin{example}
\label{OrientationsOfMGLandK}
The following orientations given right below are relevant for our work.
Here $\mathrm{MGL}$ denotes the $\Pro^1$-ring spectrum
representing algebraic cobordism obtained below in Definition~\ref{textMGL},
and $\mathrm{BGL}$ denotes the $\Pro^1$-ring spectrum
representing algebraic $K$-theory
constructed in
\cite[Theorem 2.2.1]{BGL}.
\begin{itemize}
\item
Let
$u_1: \Sigma^{\infty}_{\Pro^1}(\Th(\mathcal{T}(1)))(-1) \to \mathrm{MGL}$
be the canonical map of
$\Pro^1$-spectra. Set
$th^{\mathrm{MGL}} =u_1 \in \mathrm{MGL}^{2,1}(\Th(\mathcal{T}(1)))$.
Since
$th^{\mathrm{MGL}}|_{\Th(\mathbf{1})}= \Sigma_{\Pro^1}(1)$
in
$\mathrm{MGL}^{2,1}(\Th(\mathbf{1}))$,
the class
$th^{\mathrm{MGL}}$
is an orientation of
$\mathrm{MGL}$.
\item
Set
$c =(- \beta) \cup ([\mathcal O]-[\mathcal O(1)])
\in \mathrm{BGL}^{2,1}(\Pro^{\infty})$.
The relation
(11)
from
\cite{BGL}
shows that
the class $c$ is an orientation of
$\mathrm{BGL}$.
\end{itemize}
\end{example}

\section{Oriented ring spectra and infinite Grassmannians}
\label{OrientedSpectraAndTheory}
Let $(E,c)$ be an oriented commutative $\Pro^1$-ring spectrum. In this section we compute
the $E$-cohomology of infinite
Grassmannians and their products. The results
are the expected ones -- see Theorems
\ref{CohomologyOfGr} and \ref{CohomologyOfGrGr}.

The oriented $\Pro^1$-ring spectrum $(E,c)$ defines an oriented cohomology theory on
$\Sm/S$ in the sense of
\cite[Defn.~3.1]{PSorcoh}
as follows.
The restriction of the functor $E^{\ast,\ast}$ to the category
$\Sm/S$ is a ring cohomology theory.
By
\cite[Th.~3.35]{PSorcoh}
it remains to construct
a Chern structure on
$E^{\ast,\ast}|_{\Sm/S}$
in the sense of
\cite[Defn.~3.2]{PSorcoh}.
Let
$\mathrm{H}_\bullet(k)$
be the homotopy category of pointed motivic spaces over $k$.
The functor isomorphism
$\Hom_{\mathrm{H}_\bullet(k)}(- , \Pro^{\infty}) \to \mathrm{Pic}(-)$
on the category
$\Sm/S$
provided by
\cite[Thm.~4.3.8]{MV}
sends the class of the identity map
$\Pro^\infty \to {\Pro^{\infty}}$ to the class of the tautological line bundle
$\mathcal{O}(-1)$
over
$\Pro^{\infty}$.
For a line bundle $L$ over
$X\in \Sm/S$ let
$[L]$ be the class of $L$ in the group
$\mathrm{Pic}(X)$.
Let
$f_L\colon X \to \Pro^{\infty}$
be a morphism in
$\mathrm{H(k)}$ corresponding to
the class $[L]$
under the functor isomorphism above.
For a line bundle $L$ over $X\in \Sm/S$
set
$c(L)=f^\ast_L(c) \in E^{2,1}(X)$.
Clearly,
$c(\mathcal{O}(-1))=c$.
The assignment
$L/X \mapsto c(L)$
is a Chern structure on
$E^{\ast,\ast}|_{\Sm/S}$
since
$c|_{\Pro^1}= - \Sigma_{\Pro^1}(1) \in E^{2,1}(\Pro^1,\infty)$.
With that Chern structure
$E^{\ast,\ast}|_{\Sm/S}$
is an oriented ring cohomology theory
in the sense of
\cite{PSorcoh}.
In particular,
$(\mathrm{BGL},c^K)$
defines an oriented ring cohomology theory on
$\Sm/S$.

Given this Chern structure, one
obtains a theory of Thom classes
$V/X \mapsto th(V) \in E^{2\mathrm{rank}(V),\mathrm{rank}(V)}\bigl(\Th_X(V)\bigr)$
on the cohomology theory
$E^{\ast,\ast}|_{\Sm/S}$
in the sense of
\cite[Defn.~3.32]{PSorcoh} as follows.
There is a unique theory of Chern classes
$V \mapsto c_i(V) \in E^{2i,i}(X)$
such that for every line bundle $L$ on $X$ one has
$c_1(L)=c(L)$. For a rank $r$ vector bundle
$V$ over $X$ consider the vector bundle
$W:= {\mathbf{1}} \oplus V$
and the associated projective vector bundle
$\Pro(W)$
of lines in $W$.
Set
\begin{eqnarray}
\label{ThomBarClass}
\bar th(V)= c_r(p^\ast(V) \otimes \mathcal{O}_{\Pro(W)}(1)) \in E^{2r,r}(\Pro(W)).
\end{eqnarray}
It follows from
\cite[Cor.~3.18]{PSorcoh}
that the support extension map
\[E^{2r,r}\bigl(\Pro(W)/(\Pro(W)\smallsetminus \Pro(\mathbf{1}))\bigr)
\to E^{2r,r}\bigl(\Pro(W)\bigr)\]
is injective and
$\bar th(E) \in E^{2r,r}\bigl(\Pro(W)/(\Pro(W)\smallsetminus \Pro(\mathbf{1}))\bigr) $.
Set
\begin{eqnarray}
\label{ThomClass}
th(E)= j^\ast(\bar th(E)) \in E^{2r,r}\bigl(\Th_X(V)\bigr),
\end{eqnarray}
where
$j\colon \Th_X(V) \to \Pro(W)/ (\Pro(W) \smallsetminus \Pro({\bf 1}))$
is the canonical motivic weak equivalence of pointed motivic spaces
induced by the open embedding $V\hra \Pro(W)$.
The assignment $V/X$ to $th(V)$ is a theory of Thom classes
on $E^{\ast,\ast}|_{\Sm/S}$
(see the proof of
\cite[Thm.~3.35]{PSorcoh}). Hence the Thom classes are natural,
multiplicative and satisfy the following Thom isomorphism property.

\begin{thm}
\label{ThomIsomorphism}
For a rank $r$ vector bundle
$p\colon V \to X$
on $X\in \Sm/S$ with zero section $z\colon X\hra V$, the map
\[ -\cup th(V)\colon  E^{\ast,\ast}(X) \to E^{\ast+2r,\ast+r}\bigl(V/(V\minus z(X))\bigr) \]
is an isomorphism of two-sided
$E^{\ast,\ast}(X)$-modules,
where
$-\cup th(V)$
is written for the composition map
$\bigl(-\cup th(V)\bigr) \circ p^\ast$.
\end{thm}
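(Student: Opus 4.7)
The plan is to reduce, via the motivic weak equivalence $j\colon \Th_X(V)\to \Pro(W)/(\Pro(W)\minus \Pro(\mathbf{1}))$, to showing that cup product with the class $\bar{th}(V) = c_r(p^\ast V \otimes \mathcal O_{\Pro(W)}(1))$ induces an isomorphism
\[ E^{\ast,\ast}(X)\xrightarrow{\sim}E^{\ast+2r,\ast+r}\bigl(\Pro(W)/(\Pro(W)\minus \Pro(\mathbf{1}))\bigr) \]
of two-sided $E^{\ast,\ast}(X)$-modules, where $W = \mathbf{1}\oplus V$ and $p\colon \Pro(W)\to X$ is the structure map. The injectivity of the support-extension map already recorded in the excerpt permits this reduction.

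I then invoke the projective bundle theorem, which follows from the Chern structure on $E^{\ast,\ast}|_{\Sm/S}$ constructed above via the general machinery of \cite{PSorcoh}. It yields that $E^{\ast,\ast}(\Pro(W))$ is a free $E^{\ast,\ast}(X)$-module with basis $\{1,\xi,\dotsc,\xi^r\}$ for $\xi = c_1(\mathcal O_{\Pro(W)}(1))$, and that $E^{\ast,\ast}(\Pro(V))$ is free on $\{1,\eta,\dotsc,\eta^{r-1}\}$ for $\eta = c_1(\mathcal O_{\Pro(V)}(1))$. Next I identify $\Pro(W)\minus \Pro(\mathbf{1})$ with the total space of the line bundle $\mathcal O_{\Pro(V)}(-1)$ over $\Pro(V)$, via the projection $W\to V$ away from the first summand; the zero-section inclusion $\Pro(V)\hra \Pro(W)\minus \Pro(\mathbf{1})$ is an $\Aff^1$-weak equivalence, and under the induced identification the restriction $E^{\ast,\ast}(\Pro(W))\to E^{\ast,\ast}(\Pro(W)\minus \Pro(\mathbf{1}))$ sends $\xi\mapsto\eta$. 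Hence it is surjective with kernel a free $E^{\ast,\ast}(X)$-module of rank one on a generator sitting in bidegree $(2r,r)$.

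To pin down the generator of this kernel I apply the splitting principle to $V$: after a faithfully flat base change $V$ splits as a sum of line bundles, and the Whitney sum formula then presents $\bar{th}(V) = c_r(p^\ast V \otimes \mathcal O(1))$ as a polynomial in $\xi$ of degree $r$ with leading coefficient $1$ and lower coefficients pulled back from $E^{\ast,\ast}(X)$. Since $\bar{th}(V)$ also lies in the kernel of restriction (as recorded in the excerpt), it must generate the kernel freely. As restriction is surjective, the cofibre long exact sequence of the pair $(\Pro(W),\Pro(W)\minus \Pro(\mathbf{1}))$ collapses into short exact sequences, and the support-extension map identifies $E^{\ast,\ast}(\Pro(W)/(\Pro(W)\minus \Pro(\mathbf{1})))$ with this kernel. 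Therefore cup product with $\bar{th}(V)$ is the advertised isomorphism as a left $E^{\ast,\ast}(X)$-module; the two-sided module statement follows from the graded commutativity of the cohomology of an oriented commutative $\Pro^1$-ring spectrum cited from \cite{PY} in the preliminaries, combined with the fact that $\bar{th}(V)$ has even total degree. The main technical step is the identification $\Pro(W)\minus \Pro(\mathbf{1})\simeq \Pro(V)$ together with the splitting-principle verification that $\bar{th}(V)$ is monic in $\xi$; both are routine in the oriented cohomology framework of \cite{PSorcoh}.
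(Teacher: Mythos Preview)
Your argument is correct and considerably more detailed than the paper's own treatment, which consists of the single sentence ``See \cite[Defn.~3.32.(4)]{PSorcoh}.'' The paper's point is that the Thom isomorphism is built into the axioms for a theory of Thom classes in \cite{PSorcoh}, and since the assignment $V\mapsto th(V)$ was just checked to be such a theory (via the proof of \cite[Thm.~3.35]{PSorcoh}), the isomorphism comes for free. What you have written is essentially an unpacking of that citation: the projective-bundle computation identifying the kernel of the restriction $E^{\ast,\ast}(\Pro(W))\to E^{\ast,\ast}(\Pro(V))$ as the free rank-one submodule generated by $\bar{th}(V)$ is exactly the mechanism behind \cite[Thm.~3.35]{PSorcoh}. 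So the two approaches are the same in substance; yours is self-contained, while the paper outsources the work.

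One harmless slip: the affine bundle $\Pro(W)\smallsetminus\Pro(\mathbf{1})\to\Pro(V)$ is the total space of $\mathcal{O}_{\Pro(V)}(1)$ rather than $\mathcal{O}_{\Pro(V)}(-1)$, since a line $\ell\subset \mathbf{1}\oplus V$ with $\ell\neq\mathbf{1}$ is the graph of a linear functional $\bar\ell\to\mathbf{1}$. This does not affect the argument, as only the $\Aff^1$-contractibility onto $\Pro(V)$ is used.
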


\begin{proof}
  See \cite[Defn.~3.32.(4)]{PSorcoh}.
\end{proof}


Analogous to
\cite[p. 422]{V1}
one obtains for
vector bundles $V \to X$ and $W\to Y$ in $\Sm/S$
a canonical map of pointed motivic spaces
$\Th(V) \wedge \Th(W) \to \Th(V \times_S W)$
which is a motivic weak equivalence as defined in \cite[Defn.~3.1.6]{BGL}.
In fact, the canonical map becomes an isomorphism
after Nisnevich (even Zariski) sheafification.
Taking
$Y=S$
and
$W= \mathbf{1}$ the trivial line bundle
yields a motivic weak equivalence
$\Th(V) \wedge T \to Th(V \oplus \mathbf{1})$.
The canonical covering of $\Pro^1$ defines
motivic weak equivalences
\[ \xymatrix{T = \Aff^1/\Aff^1\minus \lbrace 0 \rbrace \ar[r]^-\sim & \Pro^1/\Aff^1 &
 \Pro^1 \ar[l]_-\sim} \]
and the arrow
$T = \Aff^1/\Aff^1\minus \lbrace 0 \rbrace \to  \Pro^1/\Pro^1\minus \lbrace 0 \rbrace$
is an isomorphism.
Hence one may switch between $T$ and $\Pro^1$ as desired.

\begin{cor}
\label{ThomAndSuspension}
For
$W=V \oplus {\mathbf{1}}$
consider the  motivic weak equivalences
\[\epsilon\colon \Th(V) \wedge \Pro^1 \to \Th(V) \wedge \Pro^1/\Aff^1 \leftarrow \Th(V) \wedge T \to \Th(W)\]
of pointed motivic spaces over $S$.
The diagram
\[
\xymatrix{
E^{\ast+2r,\ast+r}(\Th(V)) \ar[r]^-{\Sigma_{\Pro^1}} & E^{\ast+2r+2,\ast+r+1}(\Th(V) \wedge \Pro^1) \\
E^{\ast+2r,\ast+r}(\Th(V)) \ar[r]^-{\Sigma_{T}} \ar[u]^-{\mathrm{id}} & E^{\ast+2r+2,\ast+r+1}(\Th(W))
 \ar[u]_-{\epsilon^\ast}\\
E^{\ast,\ast}(X)  \ar[r]^-{\mathrm{id}} \ar[u]^-{-\cup th(V)}   &  E^{\ast,\ast}(X) \ar[u]_-{-\cup th(W)}}
\]
commutes.
\end{cor}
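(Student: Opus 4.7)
The plan is to split the diagram along its middle row into two adjacent squares and treat them separately. The upper square compares the two suspension isomorphisms $\Sigma_{\Pro^1}$ and $\Sigma_T$ across the motivic weak equivalence $\epsilon$, while the lower square relates $th(V)$ to $th(W)$ via $T$-suspension. Throughout, write $\sigma\colon \Th(V)\wedge T \xrightarrow{\sim}\Th(W)$ for the canonical weak equivalence (so that $\sigma^\ast$ is an isomorphism) and $\Sigma_T^{\mathrm{raw}}\colon E^{\ast+2r,\ast+r}(\Th(V))\to E^{\ast+2r+2,\ast+r+1}(\Th(V)\wedge T)$ for the honest $T$-suspension, so that the diagram's $\Sigma_T$ is $(\sigma^\ast)^{-1}\circ \Sigma_T^{\mathrm{raw}}$.

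For the upper square, note that $\epsilon$ factors as $\Th(V)\wedge \Pro^1 \xrightarrow{\sim} \Th(V)\wedge \Pro^1/\Aff^1 \xleftarrow{\sim} \Th(V)\wedge T \xrightarrow{\sigma} \Th(W)$, so $\epsilon^\ast\circ (\sigma^\ast)^{-1}$ equals pullback along the smash of $\Th(V)$ with the zigzag $\Pro^1 \xrightarrow{\sim} \Pro^1/\Aff^1 \xleftarrow{\sim} T$. By construction, this is precisely the identification used to transport $\Sigma_T^{\mathrm{raw}}$ to $\Sigma_{\Pro^1}$; thus $\Sigma_{\Pro^1} = \epsilon^\ast\circ(\sigma^\ast)^{-1}\circ \Sigma_T^{\mathrm{raw}} = \epsilon^\ast\circ \Sigma_T$, and the upper square commutes tautologically.

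The lower square carries the actual content. Applying $\sigma^\ast$, it suffices to show $\sigma^\ast(y\cup th(W)) = \Sigma_T^{\mathrm{raw}}(y\cup th(V))$ in $E^{\ast+2r+2,\ast+r+1}(\Th(V)\wedge T)$. By the multiplicativity of the theory of Thom classes (part of the axioms of \cite[Defn.~3.32]{PSorcoh}), $\sigma^\ast(th(W)) = th(V)\wedge th(\mathbf{1})$ as an external product on $\Th(V)\wedge \Th(\mathbf{1}) = \Th(V)\wedge T$. Granting the normalization $th(\mathbf{1}) = \Sigma_T(1)\in E^{2,1}(T)$, one then reads off $\sigma^\ast(y\cup th(W)) = (y\cup th(V))\wedge \Sigma_T(1) = \Sigma_T^{\mathrm{raw}}(y\cup th(V))$, as required. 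The main obstacle is the normalization identity $th(\mathbf{1}) = \Sigma_T(1)$: it requires specializing the construction (\ref{ThomBarClass})--(\ref{ThomClass}) to $V=\mathbf{1}$ over $S$, where $\Pro(\mathbf{1}\oplus \mathbf{1}) = \Pro^1$, and tracing through that the resulting class restricts to $\Sigma_T(1)$ under $\Th(\mathbf{1})=T$. This is forced by the compatibility of the Thom-class construction with the orientation, whose normalizing axiom in Definition~\ref{OrientationViaThom} is exactly that its restriction to the Thom space of the distinguished fibre equals $\Sigma_T(1)$.
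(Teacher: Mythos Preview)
The paper does not supply a proof for this corollary; it is stated immediately after Theorem~\ref{ThomIsomorphism} and the paragraph on external products of Thom spaces, and is evidently meant to be read as a direct consequence of that material. Your argument is correct and makes explicit precisely the two ingredients the paper leaves implicit: that the upper square is a tautology unwinding the definition of $\Sigma_T$ in terms of $\Sigma_{\Pro^1}$ via the zigzag $\Pro^1 \to \Pro^1/\Aff^1 \leftarrow T$, and that the lower square reduces, via multiplicativity of Thom classes for $V\times_S \mathbf{1}$, to the normalization $th(\mathbf{1})=\Sigma_T(1)$. One small remark: rather than tracing through the construction~(\ref{ThomBarClass})--(\ref{ThomClass}) for $V=\mathbf{1}$ to obtain the normalization, you may simply invoke it as part of the axiomatics of a theory of Thom classes in \cite[Defn.~3.32]{PSorcoh}, which is how the paper packages these properties; the consistency with the orientation that you mention is then automatic.
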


Let $\Gr(n,n+m)$ be the Grassmann scheme of $n$-dimensional linear
subspaces of $\Aff^{n+m}_S$. The closed embedding $\Aff^{n+m} = \Aff^{n+m}\times \{0\}
\hra \Aff^{n+m+1}$ defines a closed embedding
\begin{equation}\label{eq:1}
  \Gr(n,n+m)\hra \Gr(n,n+m+1).
\end{equation}
The tautological vector bundle
is denoted $\mathcal{T}(n,n+m)\to \Gr(n,n+m)$. The closed embedding~(\ref{eq:1})
is covered by a map of vector bundles $\mathcal{T}(n,n+m)\hra \mathcal{T}(n,n+m+1)$. Let
$\Gr(n) = \colim_{m\geq 0} \Gr(n,n+m)$, $\mathcal{T}(n) = \colim_{m\geq 0} \mathcal{T}(n,n+m)$
and $\Th(\mathcal{T}(n)) = \colim_{m\geq 0} \Th(\mathcal{T}(n,n+m))$.
These colimits are taken in the category of motivic spaces over $S$.

\begin{rem}
\label{FiniteGrassmannians}
It is not difficult to prove that
$E^{\ast,\ast}(\Gr(n,n+m))$
is multiplicatively generated by the Chern classes
$c_i(\mathcal{T}(n,n+m))$
of the vector bundle
$\mathcal{T}(n,n+m)$. This proves the surjectivity of the pull-back maps
$E^{\ast,\ast}(\Gr(n,n+m+1)) \to E^{\ast,\ast}(\Gr(n,n+m))$
and shows that the canonical map
$E^{\ast,\ast}(\Gr(n)) \to {\varprojlim}E^{\ast,\ast}(\Gr(n,n+m))$
is an isomorphism. Thus for each $i$ there exists a unique element
$c_i= c_i(\mathcal T(n)) \in E^{2i,i}(\Gr(n))$
which for each $m$ restricts to the element
$c_i(\mathcal{T}(n,n+m))$ under the obvious pull-back map.
\end{rem}

\begin{thm}
\label{CohomologyOfGr}
Let $E$ be an oriented $\Pro^1$-ring spectrum.
Then
\[
E^{\ast,\ast}(\Gr(n))= E^{\ast,\ast}(k)[[c_1,c_2, \dots, c_n]]
\]
is the formal power series ring.
The inclusion
$\mathrm{inc}_n\colon \Gr(n) \hra \Gr(n+1)$
satisfies
$\mathrm{inc}_n^\ast(c_m)=c_m$
for $m < n+1$ and
$\mathrm{inc}^\ast_n(c_{n+1})=0$.
\end{thm}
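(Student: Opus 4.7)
The plan is to reduce to the finite Grassmannians $\Gr(n,n+m)$ via Remark~\ref{FiniteGrassmannians}, compute each of them by the splitting principle and an iterated projective bundle argument, and then pass to the inverse limit. Since Remark~\ref{FiniteGrassmannians} already supplies the isomorphism $E^{\ast,\ast}(\Gr(n))\xrightarrow{\sim}\varprojlim_m E^{\ast,\ast}(\Gr(n,n+m))$ together with surjectivity of the transition maps, the whole computation is shifted to the finite level.

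For fixed $m$, I would consider the full flag bundle $\pi\colon \mathrm{Fl}(n,n+m)\to \Gr(n,n+m)$, which I realize as an iterated tower of projectivizations of tautological sub-bundles. The projective bundle theorem for oriented cohomology theories (a consequence of the Chern structure and Theorem~\ref{ThomIsomorphism}, cf.\ \cite[\S 3]{PSorcoh}) applied iteratively yields injectivity of $\pi^{\ast}$ and identifies its image with the ring of polynomials symmetric in the ``Chern roots'' $x_1,\dots,x_n$ of $\mathcal T(n,n+m)$. In these symmetric coordinates, the usual Borel argument (valid over any oriented theory) identifies $E^{\ast,\ast}(\Gr(n,n+m))$ with the quotient of $E^{\ast,\ast}(k)[c_1,\dots,c_n]$ by the ideal generated by $\bar c_{m+1},\dots,\bar c_{n+m}$, where $\bar c_j$ is the $j$-th Chern class of the universal rank-$m$ quotient bundle $Q(n,n+m)$; each $\bar c_j$ is a universal polynomial in $c_1,\dots,c_n$ of bidegree $(2j,j)$, determined by the Whitney relation $c(\mathcal T(n,n+m))\cup c(Q(n,n+m))=1$.

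Passing to the limit requires only a degree-counting argument: every fixed bidegree $(p,q)$ contains only finitely many monomials in $c_1,\dots,c_n$, and for $m$ large enough all relations $\bar c_j=0$ sit in bidegrees strictly beyond $(p,q)$. Thus the transition maps on the $(p,q)$-graded piece become isomorphisms for $m\gg 0$, and the common value is the $(p,q)$-piece of the formal power series ring $E^{\ast,\ast}(k)[[c_1,\dots,c_n]]$. The main technical obstacle is verifying the iterated projective bundle decomposition of $\mathrm{Fl}(n,n+m)\to \Gr(n,n+m)$ purely at the level of the oriented cohomology $E^{\ast,\ast}$; but this reduces to repeated application of the projective bundle formula, which in turn is a formal consequence of the Chern structure constructed above.

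The statement about $\mathrm{inc}_n$ is then immediate from naturality and the Whitney sum formula. The map $\mathrm{inc}_n\colon \Gr(n)\hra\Gr(n+1)$ is covered by a bundle isomorphism $\mathrm{inc}_n^{\ast}\mathcal T(n+1)\cong \mathcal T(n)\oplus\mathbf{1}$, so
\[ \mathrm{inc}_n^{\ast}c(\mathcal T(n+1)) \;=\; c(\mathcal T(n)\oplus\mathbf{1}) \;=\; c(\mathcal T(n))\cdot c(\mathbf{1}) \;=\; c(\mathcal T(n)), \]
which by comparing coefficients of the total Chern class gives $\mathrm{inc}_n^{\ast}(c_m)=c_m$ for $m\leq n$ and $\mathrm{inc}_n^{\ast}(c_{n+1})=0$ since $\mathcal T(n)\oplus\mathbf{1}$ has rank $n+1$ with vanishing $(n{+}1)$-st Chern class.
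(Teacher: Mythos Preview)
Your overall strategy---pass to flag varieties, apply the projective bundle formula iteratively, identify the image of $\pi^{\ast}$ with the symmetric functions in the Chern roots, then take the inverse limit---is the same as the paper's. The main organizational difference is that you compute $E^{\ast,\ast}(\Gr(n,n+m))$ first and pass to the limit via Remark~\ref{FiniteGrassmannians}, while the paper works directly with the ind-schemes $\Gr(n)$ and $F(n)=\colim_W \F(n,W)$. That difference is harmless, and your degree-counting argument for the limit is fine. Your treatment of $\mathrm{inc}_n^{\ast}$ via the Whitney sum formula is correct and in fact more explicit than what the paper records.

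Where your write-up is genuinely thinner than the paper is the sentence ``identifies its image with the ring of polynomials symmetric in the Chern roots''. The iterated projective bundle theorem, applied to the tower $\mathrm{Fl}(n,n+m)\to\Gr(n,n+m)$, gives you injectivity of $\pi^{\ast}$ together with the relations $\sigma_i(x_1,\dots,x_n)=\pi^{\ast}(c_i)$, so the image certainly \emph{contains} the subring generated by the elementary symmetric functions. But nothing in the projective bundle formula alone forces the image to be \emph{contained} in the symmetric functions; this is precisely the step your ``usual Borel argument (valid over any oriented theory)'' is hiding, and it is the only nontrivial point in the proof. The paper handles it by a concrete geometric trick: it replaces the flag variety $F(n)=\colim_W \mathbb{GL}_W/B_W$ by the variety $M(n)=\colim_W \mathbb{GL}_W/T_W$, where $T_W$ is the maximal torus stabilizing a fixed frame. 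The map $r\colon M(n)\to F(n)$ is a tower of affine bundles, hence an $E^{\ast,\ast}$-isomorphism, and on $M(n)$ the symmetric group $\Sigma_n$ (sitting in the normalizer of $T_W$) acts over $\Gr(n)$, permuting the $t_i$ while acting trivially on the base. This forces the image of $(q\circ r)^{\ast}$, hence of $q^{\ast}$, to lie in $E^{\ast,\ast}(k)[[t_1,\dots,t_n]]^{\Sigma_n}=E^{\ast,\ast}(k)[[\sigma_1,\dots,\sigma_n]]$. If you want to keep your finite-level presentation you can run the identical argument with $\mathbb{GL}_{n+m}/T$ in place of $\mathrm{Fl}(n,n+m)$; without it, the ``Borel argument'' needs to be spelled out, since the classical version relies on ingredients (spectral sequences with rational coefficients, or a cell decomposition) that are not automatic for an arbitrary oriented $\Pro^1$-ring spectrum.
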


\begin{proof}
The case $n=1$ is well-known
(see for instance
\cite[Thm.~3.9]{PSorcoh}).
For a
finite dimensional vector space $W$ and a positive integer $m$ let
$\F(m,W)$ be the flag variety of flags
$W_1 \subset W_2 \subset \dots \subset W_m$
of linear subspaces of $W$ such that
the dimension of $W_i$ is $i$.
Let
$\mathcal T^i(m,W)$
be the tautological rank $i$ vector bundle on
$\F(m,W)$.

Let $V =\Aff^\infty$ be an infinite dimensional vector bundle over $S$ and
set $e=(1,0,\dotsc)$. Then $V_n$ denotes the $n$-fold product of $V$, and
$e^n_i\in V_n$ the vector $(0,\dotsc,0,e,0,\dotsc,0)$ having $e$ precisely at the
$i$th position.
Let
$F(m)=\colim_W \F(m,W)$
and let
$\mathcal T^i(m)=\colim_W \mathcal T^i(m,W)$,
where $W$
runs over all finite-dimensional vector subspaces of $V_n$.
Thus we have a flag
$\mathcal T^1(m) \subset \mathcal T^2(m) \subset \dots \subset \mathcal T^m(m)$
of vector bundles over
$F(m)$.
Set
$L^i(m)=\mathcal T^i(m)/\mathcal T^{i-1}(m)$.
It is a line bundle over
$F(m)$.

Consider the morphism
$p_m\colon F(m) \to F(m-1)$
which takes a flag
$W_1 \subset W_2 \subset \dots \subset W_m$
to the flag
$W_1 \subset W_2 \subset \dots \subset W_{m-1}$.
It is a projective vector bundle over
$F(m-1)$ such that the line bundle
$L^i(m)$ is its tautological line bundle.
Thus there exists a tower of projective vector bundles
$F(m) \to F(m-1) \to \dots \to F(1)= \Pro(V_n)$.
The projective bundle theorem implies that
\[
E^{\ast,\ast}(F(n))=E^{\ast,\ast}(k)[[t_1,t_2, \dots, t_n]]
\]
(the formal power series in $n$ variables),
where
$t_i=c(L^i(n))$
is the first Chern class of the line bundle
$L^i(n)$
over
$F(n)$.

Consider the morphism
$q\colon F(n) \to \Gr(n)$,
which takes a flag
$W_1 \subset W_2 \subset \dots \subset W_n$
to the space
$W_n$.
It can be decomposed as a tower of projective vector
bundles. In particular,  the pull-back map
$q^\ast\colon E^{\ast,\ast}(\Gr(n)) \to E^{\ast,\ast}(F(n))$
is a monomorphism.
It takes the class
$c_i$
to the symmetric polynomial
$\sigma_i= t_1t_2 \dots t_i + \dots + t_{n-i+1}\dots t_{n-1}t_n$.
So the image of
$q^\ast$
contains
$E^{\ast,\ast}(k)[[\sigma_1,\sigma_2, \dots, \sigma_n]]$.
It remains to check that the image of $q^\ast$ is contained in
$E^{\ast,\ast}(k)[[\sigma_1,\sigma_2, \dots, \sigma_n]]$.
To do that consider another variety.

Namely, let
$V^0$
be the $n$-dimensional subspace of
$V_n$
generated by the vectors
$e^n_i$'s.
Let
$l^n_i$
be the line generated by the vector
$e^n_i$.
Let
$V^0_i$
be a subspace of $V^0$
generated by all
$e^n_j$'s
with
$j \leq i$.
So one has a flag
$V^0_1 \subset V^0_2 \subset \dots \subset V^0_n$.
We denote this flag
$F^0$.
For each vector subspace
$W$ in $V_n$ containing $V^0$
consider three algebraic subgroups of the general linear group
$\mathbb{GL}_W$. Namely, set
$$
P_W= Stab(V^0),\ B_W= Stab(F^0), \ T_W= Stab(l^n_1,l^n_2,\dots,l^n_n).
$$
The group
$T_W$
stabilizes
each line
$l^n_i$.
Clearly,
$T_W \subset B_W \subset P_W$
and
$\Gr(n,W)= \mathbb{GL}_W/P_W$,
$\F(n,W)= \mathbb{GL}_W/B_W$
Set
$M(n,W)= \mathbb{GL}_W/T_W$.
One has a tower of obvious morphisms
$$
M(n,W) \xra{r_W} \F(n,W) \xra{q_W} \Gr(n,W).
$$
Set
$M(n)= \colim_W M(n,W)$,
where $W$ runs over all finite dimensional subspace
$W$ of $V_n$ containing $V^0$. Now one has a tower
of morphisms
$$
M(n) \xra{r} F(n) \xra{q} \Gr(n).
$$
The morphisms $r_W$ can be decomposed in a tower
of affine bundles. Hence it induces an isomorphism
on any cohomology theory. The same then holds
for the morphism $r$ and
$$
E^{\ast,\ast}(M(n))=E^{\ast,\ast}(k)[[t_1,t_2, \dots, t_n]].
$$
Permuting vectors
$e^n_i$'s
yields an inclusion
$\Sigma_n \subset GL(V^0)$
of the symmetric group
$\Sigma_n$
in
$\mathbb{GL}(V^0)$.
The action of
$\Sigma_n$
by the conjugation on
$\mathbb{GL}_W$
normalizes the subgroups
$T_W$ and $P_W$.
Thus
$\Sigma_n$
acts as on
$M(n)$ so on $\Gr(n)$
and the morphism
$q \circ r: M(n) \to \Gr(n)$
respects this action.
Note that the action of
$\Sigma_n$
on
$\Gr(n)$
is trivial
and the action of
$\Sigma_n$
on
$E^{\ast,\ast}(M(n))$
permutes the variable
$t_1, t_2, \dots, t_n$.
Thus the image of
$(q \circ r)^*$
is contained in
$E^{\ast,\ast}(k)[[\sigma_1,\sigma_2, \dots, \sigma_n]]$.
Whence the same holds for
the image of
$q^*$.
The Theorem is proven.
\end{proof}

The projection
from the product
$\Gr(m) \times \Gr(n)$,
to the $j$-th factor is called $p_j$.
For every integer
$i \geq 0$
set
$c^{\prime}_i=p^*_1(c_i(\mathcal T(m)))$
and
$c^{\prime\prime}_i=p^*_2(c_i(\mathcal T(n)))$

\begin{thm}
\label{CohomologyOfGrGr}
Suppose $E$ is an oriented commutative $\Pro^1$-ring spectrum.
There is an isomorphism
\[
E^{\ast,\ast}\bigl((\Gr(m)\times \Gr(n))\bigr)= E^{\ast,\ast}(k)[[c^{\prime}_1,c^{\prime}_2, \dots, c^{\prime}_m,
c^{\prime\prime}_1,c^{\prime\prime}_2, \dots, c^{\prime\prime}_n]]
\]
is the formal power series on the
$c^{\prime}_i$'s
and
$c^{\prime\prime}_j$'s.
The inclusion
$i_{m,n}\colon G(m) \times \Gr(n) \hra G(m+1) \times G(n+1)$
satisfies
$i^*_{m,n}(c^{\prime}_r)=c^{\prime}_r$
for $r < m+1$,
$i^*_{m,n}(c^{\prime}_{m+1})=0$,
and
$i^*_{m,n}(c^{\prime\prime}_r)=c^{\prime\prime}_r$
for $r < n+1$,
$i^*_{m,n}(c^{\prime\prime}_{n+1})=0$.
\end{thm}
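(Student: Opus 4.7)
The plan is to follow the proof of Theorem~\ref{CohomologyOfGr} verbatim with $F(m)\times F(n)$ in place of $F(n)$, $M(m)\times M(n)$ in place of $M(n)$, and the product $\Sigma_m\times \Sigma_n$ acting factorwise in place of $\Sigma_n$.

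Concretely, I form $q\times q'\colon F(m)\times F(n)\to \Gr(m)\times \Gr(n)$ and $r\times r'\colon M(m)\times M(n)\to F(m)\times F(n)$. Pulling back the projective-bundle tower of $F(m)\to \Gr(m)$ along the projection $\Gr(m)\times \Gr(n)\to \Gr(m)$, and similarly for the second factor, exhibits $q\times q'$ itself as a tower of projective bundles. Iterated application of the projective bundle theorem then yields both the injectivity of $(q\times q')^{\ast}$ and the identification
\[
E^{\ast,\ast}(F(m)\times F(n)) = E^{\ast,\ast}(k)[[t_1,\dots,t_m,s_1,\dots,s_n]],
\]
where $t_i=c(L^i(m))$ and $s_j=c(L^j(n))$ are pulled back from the respective flag factor. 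The map $r\times r'$ is an iterated affine bundle and so induces an isomorphism on $E^{\ast,\ast}$.

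The group $\Sigma_m\times \Sigma_n$ acts on $M(m)\times M(n)$ factorwise, permuting the $t_i$ among themselves and the $s_j$ among themselves while acting trivially on $\Gr(m)\times \Gr(n)$. Hence the image of $(q\times q')^{\ast}$ lies in the invariant subring $E^{\ast,\ast}(k)[[\sigma_1(t),\dots,\sigma_m(t),\tau_1(s),\dots,\tau_n(s)]]$. By functoriality of Chern classes along $p_1$ and $p_2$ together with the single-factor computation, $(q\times q')^{\ast}(c'_i)=\sigma_i(t)$ and $(q\times q')^{\ast}(c''_j)=\tau_j(s)$, so this invariant ring is in fact the entire image. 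Combined with the injectivity of $(q\times q')^{\ast}$, the displayed presentation follows. The inclusion statement is immediate from naturality: $p_j$ intertwines $i_{m,n}$ with the single-factor inclusion $\mathrm{inc}$, so the corresponding assertion of Theorem~\ref{CohomologyOfGr} on each factor translates directly.

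The main obstacle I anticipate is checking that the image of $(q\times q')^{\ast}$ contains the \emph{full} formal power series ring in the $\sigma_i(t),\tau_j(s)$, rather than merely their polynomial subring. This requires a product analogue of Remark~\ref{FiniteGrassmannians} — namely that $E^{\ast,\ast}(\Gr(m,m+k)\times \Gr(n,n+l))$ is multiplicatively generated by the pulled-back Chern classes — together with matching of the inverse-limit topology on $E^{\ast,\ast}(\Gr(m)\times \Gr(n))$ with the formal power series topology. Both reduce to a degree-filtration argument already implicit in the single-factor case.
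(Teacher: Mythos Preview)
Your proposal is correct and is exactly the approach the paper intends: its entire proof of Theorem~\ref{CohomologyOfGrGr} reads ``Follows as in the proof of Theorem~\ref{CohomologyOfGr},'' and you have simply spelled out that product version, including the $\varprojlim$ issue implicit in Remark~\ref{FiniteGrassmannians}.
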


\begin{proof}
Follows as in the proof of Theorem~\ref{CohomologyOfGr}.
\end{proof}

\subsection{The symmetric ring spectrum representing algebraic cobordism}
\label{SpectrumMGL}

To give a construction of the
symmetric
$\Pro^1$-ring spectrum
$\mathrm{MGL}$,
recall the external product of Thom spaces
described in
\cite[p.~422]{V1}.
For
vector bundles $V \to X$ and $W\to Y$ in $\Sm/S$
one obtains a canonical map of pointed motivic spaces
$\Th(V) \wedge \Th(W) \to \Th(V \times_S W)$
which is a motivic weak equivalence as defined in \cite[Defn.~3.1.6]{BGL}.
In fact, the canonical map becomes an isomorphism
after Nisnevich (even Zariski) sheafification.

The algebraic cobordism spectrum appears naturally as a $T$-spectrum, not as a
$\Pro^1$-spectrum. Hence we describe it as a symmetric $T$-ring spectrum and obtain
a symmetric $\Pro^1$-ring spectrum (and in particular a $\Pro^1$-ring spectrum) by
switching the suspension coordinate (see \cite[A.6.9]{BGL}).
For $m, n \geq 0$ let $\mathcal{T}(n,mn)\to \Gr(n,mn)$ denote the tautological
vector bundle over the Grassmann scheme of $n$-dimensional linear subspaces of
$\Aff^{mn}_S = \Aff^m_S\times_S \dotsm \times_S \Aff^m_S$. Permuting the copies
of $\Aff^m_S$ induces a $\Sigma_n$-action on $\mathcal{T}(n,mn)$ and $\Gr(n,mn)$
such that the bundle projection is equivariant.
The closed embedding $\Aff^m_S = \Aff^m_S\times \lbrace 0 \rbrace
\hra \Aff^{m+1}_S$ defines a closed $\Sigma_n$-equivariant
embedding $\Gr(n,mn)\hra \Gr(n,(m+1)n)$.
In particular, $\Gr(n,mn)$ is pointed by $g_n\colon S = \Gr(n,n)\hra \Gr(n,mn)$.
The fiber of $\Gr(n,mn)$ over $g_n$ is $\Aff^n_S$.
Let $\Gr(n)$ be the colimit
of the sequence
\[ \Gr(n,n)\hra \Gr(n,2n) \hra \dotsm \hra \Gr(n,mn)\hra \dotsm \]
in the category of pointed motivic spaces over $S$. The pullback diagram
\[ \xymatrix{ \mathcal{T}(n,mn) \ar[r] \ar[d] & \mathcal{T}(n,(m+1)n) \ar[d] \\
                       \Gr(n,mn) \ar[r] & \Gr(n,(m+1)n)} \]
induces a $\Sigma_n$-equivariant
inclusion of Thom spaces \[\Th(\mathcal{T}(n,mn)) \hra \Th(\mathcal{T}(n,(m+1)n)).\]
Let $\mathbb{MGL}_n$ denote the colimit of the resulting sequence
\begin{eqnarray}\label{eq:mglt}
 \mathbb{MGL}_n = \colim_{m\geq n} \Th(\mathcal{T}(n,mn))
\end{eqnarray}
with the induced $\Sigma_n$-action.
There is a closed embedding
\begin{eqnarray}\label{eq:111}
	\Gr(n,mn)\times \Gr(p,mp) \hra \Gr(n+p,m(n+p))
\end{eqnarray}
which sends the linear subspaces
$V\hra \Aff^{mn}$ and $W\hra \Aff^{mp}$ to the product subspace
$V\times W \hra \Aff^{mn}\times \Aff^{mp} = \Aff^{m(n+p)}$.
In particular
$(g_n,g_p)$ maps to $g_{n+p}$. The inclusion~(\ref{eq:111}) is covered by a map of
tautological vector bundles and thus gives a canonical map of Thom spaces
\begin{eqnarray}\label{eq:2}
	\Th(\mathcal{T}(n,mn)) \wedge \Th(\mathcal{T}(p,mp)) \to \Th(\mathcal{T}(n+p,
	  m(n+p)))
\end{eqnarray}
which is compatible with the colimit~(\ref{eq:mglt}). Furthermore, the map~(\ref{eq:2})
is $\Sigma_n\times \Sigma_p$-equivariant, where the product acts on the target
via the standard inclusion $\Sigma_n\times \Sigma_p\subseteq \Sigma_{n+p}$.
After taking colimits, the result is a $\Sigma_n\times \Sigma_p$-equivariant map
\begin{eqnarray}
\label{MglProduct} \mu_{n,p}\colon \mathbb{MGL}_n\wedge \mathbb{MGL}_p \to
\mathbb{MGL}_{n+p}
\end{eqnarray}
of pointed motivic spaces (see \cite[p.~422]{V1}).
The inclusion of the fiber $\Aff^p$ over $g_p$ in $\mathcal{T}(p)$ induces
an inclusion $\Th(\mathbb{A}^p)\subset \Th(\mathcal T(p))=\mathbb{MGL}_p$.
Precomposing it with the canonical $\Sigma_p$-equivariant map of pointed motivic spaces
\[
\Th(\mathbb{A}^1)\wedge \Th(\mathbb{A}^1)\wedge \dotsm \wedge \Th(\mathbb{A}^1) \to
\Th(\mathbb{A}^p)
\]
defines a family of maps $e_p\colon(\Sigma^\infty_T S_+)_p = T^{\wedge p} \to \mathbb{MGL}_p$.
Inserting it in the inclusion (\ref{MglProduct}) yields
$\Sigma_n\times \Sigma_p$-equivariant structure maps
\begin{eqnarray}\label{eq:str-mglt}
\label{MglSuspension} \mathbb{MGL}_n \wedge \Th(\mathbb{A}^1)\wedge
\Th(\mathbb{A}^1)\wedge \dotsm \wedge \Th(\mathbb{A}^1)
\to
\mathbb{MGL}_{n+p}
\end{eqnarray}
of the symmetric $T$-spectrum $\mathbb{MGL}$.
The family of $\Sigma_n\times \Sigma_p$-equivariant maps~(\ref{MglProduct})
form a commutative, associative and unital multiplication
on the symmetric $T$-spectrum $\mathbb{MGL}$ (see
\cite[Sect.~4.3]{J}). Regarded as a $T$-spectrum it coincides with
Voevodsky's spectrum $\mathbf{MGL}$ described in \cite[6.3]{V1}.


Let $\overline{T}$ be the Nisnevich sheaf associated to the presheaf
$X \mapsto \Pro^1(X)/(\Pro^1 - \{0\})(X)$
on the Nisnevich site
$\Sm/S$. The canonical covering of $\Pro^1$ supplies an isomorphism
\[ \xymatrix{ T =\Th(\Aff^1_S) \ar[r]^-\cong &\overline{T}} \]
of pointed motivic spaces. This isomorphism induces an isomorphism
$\MSS_T(S) \cong \MSS_{\overline{T}}(S)$ of the categories of
symmetric $T$-spectra and symmetric $\overline{T}$-spectra.
In particular, $\mathbb{MGL}$ may be regarded as a symmetric $\overline{T}$-spectrum
by just changing the structure maps up to an isomorphism.
Note that the isomorphism of categories respects both the
symmetric monoidal structure and the model structure.
The canonical projection $p\colon \Pro^1 \to \overline{T}$ is a
motivic weak equivalence, because $\Aff^1$ is contractible.
It induces a Quillen equivalence
\[ \xymatrix{ \MSS(S)=\MSS_{\Pro^1}(S) \ar@<0.7ex>[r]^-{p_\sharp}  & \MSS_{\overline{T}}(S)\ar@<0.7ex>[l]^-{p^\ast}} \]
when equipped with model structures as described in \cite{J} (see \cite[A.6.9]{BGL}). The right adjoint
$p^\ast$ is very simple: it sends a symmetric $\overline{T}$-spectrum $E$ to the symmetric $\Pro^1$-spectrum
having terms $\bigl(p^\ast(E)\bigr)_n =E_n$ and structure maps
\[ \xymatrix@C=5em{E_n\wedge \Pro^1 \ar[r]^-{E_n\wedge p} & E\wedge \overline{T} \ar[r]^-{\mathrm{structure\ map}}
  & E_{n+1}}. \]
In particular $\mathrm{MGL} :=p^\ast \mathbb{MGL}$ is a
symmetric $\Pro^1$-spectrum by just changing the structure maps.
Since $p^\ast$ is a lax symmetric monoidal functor, $\mathrm{MGL}$ is a
commutative monoid in a canonical way.
Finally, the identity
is a left Quillen equivalence from the model category $\MSS^\cm(S)$ used in
\cite{BGL} to Jardine's model structure by the proof of \cite[A.6.4]{BGL}.
Let
$\gamma\colon \mathrm{Ho}(\MSS^\cm(S)) \to  \SH(S)$ denote the
equivalence obtained by regarding a symmetric $\Pro^1$-spectrum just as a
$\Pro^1$-spectrum.


\begin{defn}
\label{textMGL}
Let
$(\mathrm{MGL},\mu_\mathrm{MGL},e_\mathrm{MGL})$
denote the
commutative $\Pro^1$-ring spectrum which is the image $\gamma(\mathrm{MGL})$
of the commutative symmetric $\Pro^1$-ring spectrum
$\mathrm{MGL}$ in the motivic stable homotopy category $\SH(S)$.
\end{defn}

\subsection{Cohomology of the algebraic cobordism spectrum}
Let $E$ be an oriented commutative $\Pro^1$-ring spectrum and
let $S=\Spec(k)$ for a field $k$. We will compute
$E^{\ast,\ast}(\mathrm{MGL})$
and
$E^{\ast,\ast}(\mathrm{MGL} \wedge \mathrm{MGL})$
in this short section.

By
\cite[Cor.~2.1.4]{BGL},
the group
$E^{\ast,\ast}(\mathrm{MGL})$
fits into the short exact sequence
\[
0 \to {\varprojlim}^{1}E^{\ast+2i-1,\ast+i}(\Th(\mathcal T(i)))
\to E^{\ast,\ast}(\mathrm{MGL})
\to \varprojlim E^{\ast+2i,\ast+i}(\Th(\mathcal T(i))) \to 0
\]
where the connecting maps in the tower are given by the top
line of the commutative diagram
\[ \xymatrix{
E^{\ast+2i-1,\ast+i}(\Th(i)) &
E^{\ast+2i+1,\ast+i+1}(\Th(i) \wedge \Pro^1) \ar[l]_-{\Sigma^{-1}_{\Pro^1}} &
E^{\ast+2i+1,\ast+i+1}(\Th(i+1)) \ar[l] \\
E^{\ast,\ast}(\Gr(i)) \ar[u]^-{-\cup th(\mathcal{T}(i))} &
E^{\ast,\ast}(\Gr(i)) \ar[l]_-{\id}
\ar[u]_-{\epsilon^\ast \circ(-\cup th(\mathcal{T}(i)\oplus \mathbf{1}))} &
E^{\ast,\ast}(\Gr(i+1)) \ar[l]_-{\mathrm{inc}_i^\ast} \ar[u]_-{-\cup th(\mathcal{T}(i+1))}}\]
Here
$\epsilon\colon \Th(V) \wedge \Pro^1 \to Th(V\oplus \mathbf{1})$
is the canonical map described in Corollary
\ref{ThomAndSuspension}.
The pull-backs
$\mathrm{inc}^\ast_i$
are all surjective by Theorem
\ref{ThomIsomorphism}.
So we proved the following
\begin{clm}
\label{CohomologyOfMGL}
The canonical map
\[
E^{\ast,\ast}(\mathrm{MGL}) \to \varprojlim E^{\ast+2i,\ast+i}(\Th(\mathcal T(i)))=
E^{\ast,\ast}(k)[[c_1,c_2,c_3,\dots]]
\]
is an isomorphism of two-sided
$E^{\ast,\ast}(k)$-modules.
\end{clm}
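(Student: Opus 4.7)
The plan is to build the isomorphism in stages, combining the Milnor short exact sequence for the cohomology of a sequential colimit spectrum with the Thom isomorphism and the known cohomology of infinite Grassmannians.

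First, I would invoke the $\varprojlim$--$\varprojlim^1$ short exact sequence displayed just above the claim (this is \cite[Cor.~2.1.4]{BGL}), which reduces the computation of $E^{\ast,\ast}(\mathrm{MGL})$ to understanding the tower
$$\bigl\lbrace E^{\ast+2i,\ast+i}(\Th(\mathcal{T}(i)))\bigr\rbrace_{i\geq 0}$$
together with its transition maps. Applying the Thom isomorphism (Theorem~\ref{ThomIsomorphism}) with Thom class $th(\mathcal{T}(i))$ identifies $E^{\ast+2i,\ast+i}(\Th(\mathcal{T}(i)))$ with $E^{\ast,\ast}(\Gr(i))$ as a two-sided $E^{\ast,\ast}(k)$-module. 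The key point is that under these identifications the tower becomes the tower of inclusions $\mathrm{inc}_i^\ast\colon E^{\ast,\ast}(\Gr(i+1)) \to E^{\ast,\ast}(\Gr(i))$; this is precisely what is encoded in the commutative diagram reproduced before the claim, whose left column uses $-\cup th(\mathcal{T}(i))$ and whose bottom row is $\mathrm{inc}_i^\ast$. The compatibility of the two Thom classes $th(\mathcal{T}(i))$ and $th(\mathcal{T}(i)\oplus \mathbf{1})$ needed to make the middle square commute is provided by Corollary~\ref{ThomAndSuspension}.

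Next, I would apply Theorem~\ref{CohomologyOfGr} to identify $E^{\ast,\ast}(\Gr(i))$ with the formal power series ring $E^{\ast,\ast}(k)[[c_1,\dots,c_i]]$, and to read off the behavior of $\mathrm{inc}_i^\ast$: it sends $c_m \mapsto c_m$ for $m\leq i$ and $c_{i+1}\mapsto 0$. In particular each $\mathrm{inc}_i^\ast$ is surjective. This surjectivity is exactly the Mittag--Leffler condition, which forces
$$\varprojlim\nolimits^1 E^{\ast+2i-1,\ast+i}(\Th(\mathcal{T}(i))) = 0.$$
Hence the Milnor sequence collapses to an isomorphism between $E^{\ast,\ast}(\mathrm{MGL})$ and the remaining $\varprojlim$ term.

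Finally, computing that inverse limit is a direct exercise: an element of $\varprojlim E^{\ast,\ast}(k)[[c_1,\dots,c_i]]$ along the truncation maps $c_{i+1}\mapsto 0$ is by definition a compatible family of formal power series, which uniquely assembles into an element of $E^{\ast,\ast}(k)[[c_1,c_2,c_3,\dots]]$. All identifications are maps of two-sided $E^{\ast,\ast}(k)$-modules because the Thom isomorphism and the Grassmannian projective bundle decomposition both preserve this structure. The only potential obstacle is checking that the transition maps in the tower really correspond to $\mathrm{inc}_i^\ast$ under the Thom isomorphism, but this reduces to the naturality and multiplicativity of Thom classes together with Corollary~\ref{ThomAndSuspension}, which are already in hand.
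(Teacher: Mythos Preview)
Your proposal is correct and follows essentially the same route as the paper: invoke the $\varprojlim$--$\varprojlim^1$ sequence from \cite[Cor.~2.1.4]{BGL}, identify the tower with $\{E^{\ast,\ast}(\Gr(i))\}$ via the Thom isomorphism and Corollary~\ref{ThomAndSuspension}, and then use the surjectivity of $\mathrm{inc}_i^\ast$ (coming from Theorem~\ref{CohomologyOfGr}) to kill the $\varprojlim^1$ term. If anything, your write-up is more explicit than the paper's, which compresses the argument into the diagram preceding the claim and a single sentence about surjectivity.
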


Now compute
$E^{\ast,\ast}(\mathrm{MGL} \wedge \mathrm{MGL})$.
By 
\cite[Cor.~2.1.5]{BGL}
the group
$E^{\ast,\ast}(\mathrm{MGL} \wedge \mathrm{MGL})$
fits into the short exact sequence
$$
0 \to {\varprojlim}^{1}E^{\ast+4i-1,\ast+2i}(\Th(\mathcal T(i)) \wedge \Th(\mathcal T(i)))
\to E^{\ast,\ast}(\mathrm{MGL} \wedge \mathrm{MGL})
$$
$$
\to \varprojlim E^{\ast+4i,\ast+2i}(Th(\mathcal T(i)) \wedge Th(\mathcal T(i))) \to 0.
$$
Note that since
$\Th(\mathcal T(i)) \wedge \Th(\mathcal T(i))\cong \Th(\mathcal T(i) \times \mathcal T(i))$,
there is a Thom isomorphism
$E^{\ast+4i-1,\ast+2i}(\Th(\mathcal T(i) \times \mathcal T(i))) \cong E^{\ast-1,\ast}(\Gr(i) \times \Gr(i))$
by Theorem
\ref{ThomIsomorphism}.
The
${\varprojlim}^{1}$-group is trivial
because the connecting maps coincide with the pull-back maps
$$
E^{*-1,*}(\Gr(i+1) \times \Gr(i+1)) \to E^{*-1,*}(\Gr(i) \times \Gr(i))
$$
and these are surjective by Theorem
\ref{CohomologyOfGrGr}.
This implies the following

\begin{clm}
\label{CohomologyOfMGLMGL}
The canonical map
$$
E^{\ast,\ast}(\mathrm{MGL} \wedge \mathrm{MGL}) \to
\varprojlim E^{\ast+2i,\ast+i}(\Th(\mathcal T(i)) \wedge \Th(\mathcal T(i)))=
$$
$$
E^{\ast,\ast}(k)[[c^{\prime}_1, c^{\prime\prime}_1, c^{\prime}_2, c^{\prime\prime}_2, \dots]]
$$
is an isomorphism of two-sided
$E^{\ast,\ast}(k)$-modules. Here
$c^{\prime}_i$
is the $i$-th Chern class coming from the first
factor of
$\mathrm{Gr} \times \mathrm{Gr}$
and
$c^{\prime\prime}_i$
is the $i$-th Chern class coming from the second factor.
\end{clm}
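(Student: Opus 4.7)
The plan is to mirror the computation already carried out for $E^{\ast,\ast}(\mathrm{MGL})$ in Claim \ref{CohomologyOfMGL}, only this time with the smash product $\mathrm{MGL} \wedge \mathrm{MGL}$ and the external products $\Th(\mathcal{T}(i))\wedge \Th(\mathcal{T}(i))$. The first step is to invoke the Milnor-type short exact sequence from \cite[Cor.~2.1.5]{BGL}, which presents $E^{\ast,\ast}(\mathrm{MGL}\wedge \mathrm{MGL})$ as an extension of $\varprojlim E^{\ast+4i,\ast+2i}(\Th(\mathcal{T}(i))\wedge \Th(\mathcal{T}(i)))$ by ${\varprojlim}^1 E^{\ast+4i-1,\ast+2i}(\Th(\mathcal{T}(i))\wedge \Th(\mathcal{T}(i)))$, with transition maps given by the smash-square analogue of the diagram used in Claim~\ref{CohomologyOfMGL}.

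Next I would identify each term in the tower. The canonical motivic weak equivalence $\Th(\mathcal{T}(i))\wedge \Th(\mathcal{T}(i))\cong \Th(\mathcal{T}(i)\times \mathcal{T}(i))$ together with the Thom isomorphism of Theorem \ref{ThomIsomorphism} (applied to the external product bundle over $\Gr(i)\times \Gr(i)$) yields an $E^{\ast,\ast}(k)$-bimodule isomorphism
\[
E^{\ast+4i,\ast+2i}\bigl(\Th(\mathcal{T}(i))\wedge \Th(\mathcal{T}(i))\bigr)\cong E^{\ast,\ast}\bigl(\Gr(i)\times \Gr(i)\bigr).
\]
Under this identification, the connecting maps of the tower coincide (up to the evident suspension bookkeeping, handled exactly as in Corollary \ref{ThomAndSuspension}) with the pullbacks
\[
i_{i,i}^\ast\colon E^{\ast,\ast}\bigl(\Gr(i+1)\times \Gr(i+1)\bigr)\to E^{\ast,\ast}\bigl(\Gr(i)\times \Gr(i)\bigr).
\]

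Now I would apply Theorem \ref{CohomologyOfGrGr}, which describes each $E^{\ast,\ast}(\Gr(i)\times \Gr(i))$ as a formal power series ring in the Chern classes $c'_r,c''_r$, and explicitly computes the pullbacks $i_{i,i}^\ast$: they send $c'_r$ and $c''_r$ to themselves for $r\leq i$ and kill $c'_{i+1}$ and $c''_{i+1}$. In particular these pullbacks are surjective, so the ${\varprojlim}^1$-term in the Milnor sequence vanishes, and the canonical map
\[
E^{\ast,\ast}(\mathrm{MGL}\wedge \mathrm{MGL})\to \varprojlim_i E^{\ast,\ast}\bigl(\Gr(i)\times \Gr(i)\bigr)
\]
is an isomorphism. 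Identifying the inverse limit of the compatible power series rings yields $E^{\ast,\ast}(k)[[c'_1,c''_1,c'_2,c''_2,\dots]]$, which is the statement of the claim.

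The only delicate point, and the step I would verify carefully rather than treat as routine, is the compatibility between the Thom isomorphisms at levels $i$ and $i+1$ and the $\mathrm{MGL}\wedge \mathrm{MGL}$ structure maps; this is the two-variable analogue of the commutative diagram displayed just before Claim~\ref{CohomologyOfMGL}, and it comes down to checking that the suspension map $\epsilon$ of Corollary \ref{ThomAndSuspension} and the multiplicativity of Thom classes interact well with the external product $\Th(\mathcal{T}(i))\wedge \Th(\mathcal{T}(i))\to \Th(\mathcal{T}(i+1))\wedge \Th(\mathcal{T}(i+1))$. Once this bookkeeping is in place, the rest of the argument is formal.
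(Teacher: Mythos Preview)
Your proposal is correct and follows essentially the same route as the paper: invoke the Milnor sequence of \cite[Cor.~2.1.5]{BGL}, use the weak equivalence $\Th(\mathcal T(i))\wedge \Th(\mathcal T(i))\cong \Th(\mathcal T(i)\times \mathcal T(i))$ together with the Thom isomorphism of Theorem~\ref{ThomIsomorphism} to identify the transition maps with the pullbacks along $\Gr(i)\times\Gr(i)\hookrightarrow \Gr(i+1)\times\Gr(i+1)$, and then appeal to Theorem~\ref{CohomologyOfGrGr} for their surjectivity, killing the $\varprojlim^1$-term. The paper is terser about the suspension bookkeeping you flag as delicate, but the content is the same.
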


\subsection{A universality theorem for the algebraic cobordism spectrum}

The complex cobordism spectrum, equipped with its natural orientation,
is a universal oriented ring cohomology theory by Quillen's universality
theorem \cite{Quillen:mu}. In this section
we prove a motivic version of Quillen's universality theorem.
The statement is contained already in \cite{Vez}.
Recall that the $\Pro^1$-ring spectrum $\mathrm{MGL}$
carries a canonical orientation $th^{\mathrm{MGL}}$ as defined
in \ref{OrientationsOfMGLandK}.
It is the canonical map
$th^{\mathrm{MGL}}\colon \Sigma^{\infty}_{\Pro^1}(Th(\mathcal O(-1)))(-1) \to \mathrm{MGL}$
of
$\Pro^1$-spectra. 

\begin{thm}[Universality Theorem]
\label{UniversalityThm}
Let $E$ be a commutative $\Pro^1$-ring spectrum and
let $S=\Spec(k)$ for a field $k$.
The assignment
$\varphi \mapsto \varphi(th^{\mathrm{MGL}}) \in E^{2,1}(\Th(\mathcal T(1)))$
identifies the set of monoid homomorphisms
\begin{eqnarray}
\label{OrientationMap}
\varphi\colon \mathrm{MGL} \to E
\end{eqnarray}
in the motivic stable homotopy category
$\mathrm{SH}^{\cm}(S)$
with the set of orientations of $E$.
The inverse bijection sends an orientation
$th \in E^{2,1}(\Th(\mathcal T(1)))$
to the unique morphism
\[\varphi \in E^{0,0}(\mathrm{MGL})=\Hom_{\SH(S)}(\mathrm{MGL},E)\]
such that
$u_i^\ast(\varphi)= th(\mathcal T(i)) \in E^{2i,i}(\Th(\mathcal T(i)))$,
where
$th(\mathcal T(i))$
is given by
(\ref{ThomClass})
and  \\
$u_i\colon \Sigma^{\infty}_{\Pro^1}(\Th(\mathcal T(i)))(-i) \to \mathrm{MGL}$
is the canonical map of $\Pro^1$-spectra.
\end{thm}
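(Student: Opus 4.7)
The plan is to build the inverse bijection explicitly from the cohomology calculations in Claims~\ref{CohomologyOfMGL} and~\ref{CohomologyOfMGLMGL}, and then to establish multiplicativity by reduction to the external multiplicativity of Thom classes built into the theory of oriented cohomology via a Chern structure.

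\textbf{Construction of the inverse.} Given an orientation $th$ of $E$, the associated Chern structure on $E^{*,*}|_{\Sm/S}$ determines Thom classes $th(V) \in E^{2r,r}(\Th(V))$ for every rank-$r$ bundle $V \to X$ via~(\ref{ThomClass}). Specializing to the tautological bundles $\mathcal{T}(i) \to \Gr(i)$, the naturality and stabilization properties of these Thom classes (compare Corollary~\ref{ThomAndSuspension}) make $(th(\mathcal{T}(i)))_{i\geq 0}$ into a compatible system in the Milnor tower preceding Claim~\ref{CohomologyOfMGL}. The $\varprojlim^1$ term vanishes by the surjectivity of the tower maps (Theorem~\ref{CohomologyOfGr}), so the claim produces a unique morphism $\varphi \colon \mathrm{MGL} \to E$ in $\SH(S)$ satisfying $u_i^*(\varphi) = th(\mathcal{T}(i))$ for every $i$.

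\textbf{Main obstacle --- multiplicativity.} Unit compatibility is immediate, since $u_0\colon \mathbb{I}_S \to \mathrm{MGL}$ is the unit and $u_0^*(\varphi) = th(\mathcal{T}(0)) = 1$. The hard step will be to prove $\varphi \circ \mu_\mathrm{MGL} = \mu_E \circ (\varphi \wedge \varphi)$ as morphisms $\mathrm{MGL} \wedge \mathrm{MGL} \to E$, viewed as elements of $E^{0,0}(\mathrm{MGL} \wedge \mathrm{MGL})$. By Claim~\ref{CohomologyOfMGLMGL} it suffices to compare their pullbacks along each $u_i \wedge u_j$. By construction of $\mu_\mathrm{MGL}$ (see~(\ref{MglProduct})), the composite $\mu_\mathrm{MGL} \circ (u_i \wedge u_j)$ factors, up to desuspension, through $u_{i+j}$ via the canonical map $\Th(\mathcal{T}(i)) \wedge \Th(\mathcal{T}(j)) \to \Th(\mathcal{T}(i+j))$ coming from the external product of bundles. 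Hence the pullback of $\varphi \circ \mu_\mathrm{MGL}$ becomes the restriction of $th(\mathcal{T}(i+j))$, while the pullback of $\mu_E \circ (\varphi \wedge \varphi)$ is the external product $th(\mathcal{T}(i)) \wedge th(\mathcal{T}(j))$. These agree by the external multiplicativity of the Thom classes built from a Chern structure (see~\cite[Thm.~3.35]{PSorcoh}). Careful bookkeeping of the $(-i)$-twists and the suspension identifications of Corollary~\ref{ThomAndSuspension} is the main technical burden.

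\textbf{Mutual inverse.} The direction $th \mapsto \varphi \mapsto \varphi(th^{\mathrm{MGL}})$ returns $u_1^*(\varphi) = th(\mathcal{T}(1)) = th$ by the last sentence of Definition~\ref{OrientationViaThom}. In the opposite direction, starting from a monoid homomorphism $\varphi$ and setting $th := \varphi(th^{\mathrm{MGL}})$, one first observes that within $\mathrm{MGL}$ itself the canonical class $u_i$ coincides with the Thom class $th^{\mathrm{MGL}}(\mathcal{T}(i))$: this is the rank-one case $u_1 = th^{\mathrm{MGL}}$ by definition, extended inductively by the same factorization of $\mu_\mathrm{MGL}$ used above together with the characterization of Thom classes as multiplicative extensions of the rank-one orientation. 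Because $\varphi$ is a ring map, it carries these multiplicatively-built Thom classes to the corresponding classes for $(E,th)$, giving $u_i^*(\varphi) = \varphi_*\bigl(th^{\mathrm{MGL}}(\mathcal{T}(i))\bigr) = th(\mathcal{T}(i))$. The uniqueness from Step~1 then forces the morphism reconstructed from $th$ to coincide with the original $\varphi$, completing the bijection.
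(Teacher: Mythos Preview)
Your proposal is correct and follows essentially the same route as the paper: construct $\varphi$ from the inverse-limit description of $E^{0,0}(\mathrm{MGL})$ in Claim~\ref{CohomologyOfMGL}, verify multiplicativity by pulling back along $u_i\wedge u_j$ and invoking Claim~\ref{CohomologyOfMGLMGL} together with external multiplicativity of Thom classes, and close the loop via the identity $u_i = th^{\mathrm{MGL}}(\mathcal T(i))$. The one place the paper is more explicit is this last identity (its Claim~\ref{UandThom}): your inductive sketch hides the crucial fact that $\mu_{1,\dots,1}^\ast$ is \emph{injective}, which the paper establishes by reducing via the Thom isomorphism to the inclusion of symmetric power series $E^{\ast,\ast}(k)[[c_1,\dots,c_i]]\hookrightarrow E^{\ast,\ast}(k)[[t_1,\dots,t_i]]$ from Theorems~\ref{CohomologyOfGr} and~\ref{CohomologyOfGrGr}.
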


\begin{proof}
Let $\varphi\colon \mathrm{MGL}\to E$ be a homomorphism of monoids
in $\SH(S)$. The class
$th:=\varphi(th^{\mathrm{MGL}})$
is an orientation of $E$, because
\[ \varphi(th)|_{Th(\bf 1)}= \varphi(th|_{Th(\bf 1)})=
\varphi(\Sigma_{\Pro^1}(1))= \Sigma_{\Pro^1}(\varphi(1))= \Sigma_{\Pro^1}(1).\]
Now suppose
$th^E \in E^{2i,i}(\Th(\mathcal O(-1)))$
is an orientation of $E$. We will construct
a monoid homomorphism
$\varphi\colon \mathrm{MGL} \to E$
in $\SH(S)$ such that
$u_i^*(\varphi)= th(\mathcal T(i))$ and prove its uniqueness.
To do so recall that by Claim
\ref{CohomologyOfMGL}
the canonical map
$E^{\ast,\ast}(\mathrm{MGL}) \to \varprojlim E^{\ast+2i,\ast+i}(\Th(\mathcal T(i)))$
is an isomorphism.
The family of elements
$th(\mathcal T(i))$
is an element in the
$\varprojlim$-group, thus there is a unique
element $\varphi \in E^{0,0}(\mathrm{MGL})$
with
$u_i^*(\varphi)= th(\mathcal T(i))$.

We claim that $\varphi$ is a monoid homomorphism.
To check that it respects the multiplicative
structure, consider the diagram
\[ \xymatrix@C=6em{
\Sigma^{\infty}_{\Pro^1}(\Th(\mathcal T(i)))(-i) \wedge \Sigma^{\infty}_{\Pro^1}(\Th(\mathcal T(j)))(-j)
\ar[r]^-{\Sigma^{\infty}_{\Pro^1}(\mu_{i,j})(-i-j)} \ar[d]_-{u_i\wedge u_j} &
\Sigma^{\infty}_{\Pro^1}(\Th(\mathcal T(i+j)))(-i-j) \ar[d]^-{u_{i+j}} \\
\mathrm{MGL} \wedge \mathrm{MGL}  \ar[r]^-{\mu_{\mathrm{MGL}}} \ar[d]_-{\varphi \wedge \varphi}
& \mathrm{MGL}  \ar[d]^-{\varphi}  \\
E \wedge E       \ar[r]^-{\mu_E} &     E.} \]
Its enveloping square commutes in
$\mathrm{SH}(S)$
by the chain of relations
\begin{eqnarray*}
\varphi \circ u_{i+j} \circ \Sigma^{\infty}_{\Pro^1}(\mu_{i,j})(-i-j) & = &
\mu^\ast_{i,j}(th(\mathcal T(i+j)))=
th(\mu^\ast_{i,j}(\mathcal T(i+j)))=
th(\mathcal T(i) \times \mathcal T(j)) \\ & = &
th(\mathcal T(i)) \times th(\mathcal T(j))=
\mu_E(th(\mathcal T(i)) \wedge th(\mathcal T(j))) \\ & = &
\mu_E \circ ((\varphi \circ u_i) \wedge (\varphi \circ u_j)).
\end{eqnarray*}
The canonical map
$E^{\ast,\ast}(\mathrm{MGL} \wedge \mathrm{MGL})
\to
\varprojlim E^{\ast+4i,\ast+2i}(Th(\mathcal T(i)) \wedge Th(\mathcal T(i)))$
is an isomorphism by Claim
\ref{CohomologyOfMGLMGL}.
Now the equality
$$
\varphi \circ u_{i+i} \circ \Sigma^{\infty}_{\Pro^1}(\mu_{i,i})(-2i)=
\mu_E \circ ((\varphi \circ u_i) \wedge (\varphi \circ u_i))
$$
shows that
$\mu_E \circ (\varphi \wedge \varphi) = \varphi \circ \mu_{\mathrm{MGL}}$
in
$\mathrm{SH}(k)$.

To prove the Theorem it remains to check that the two assignments
described in the Theorem are inverse to each other.
An orientation
$th \in E^{2,1}(\Th(\mathcal O(-1)))$
induces a morphism
$\varphi$
such that for each $i$ one has
$\varphi \circ u_i = th(\mathcal T_i)$.
And the new orientation
$th^{\prime}:= \varphi(th^{\mathrm{MGL}})$
coincides with the original one, due to the chain of relations
$$
th^{\prime} = \varphi(th^{\mathrm{MGL}})= \varphi(u_1)= \varphi \circ u_1 =
th(\mathcal T(1))=th(\mathcal O(-1))=th.
$$

On the other hand
a monoid homomorphism
$\varphi$
defines an orientation
$th:= \varphi(th^{\mathrm{MGL}})$
of $E$. The  monoid homomorphism
$\varphi^{\prime}$ we obtain then
satisfies
$u^*_i(\varphi^{\prime})=th(\mathcal T(i))$
for every $i\geq 0$.
To check that
$\varphi^{\prime}=\varphi$,
recall that
$\mathrm{MGL}$
is oriented, so we may use
Claim
\ref{CohomologyOfMGL}
with
$E= \mathrm{MGL}$
to deduce an isomorphism
$$
\mathrm{MGL}^{\ast,\ast}(\mathrm{MGL})
\to \varprojlim \mathrm{MGL}^{\ast+2i,\ast+i}(\Th(\mathcal T(i))).
$$
This isomorphism shows that  the identity
$\varphi^{\prime}=\varphi$
will follow from the identities
$u^*_i(\varphi^{\prime})= u^*_i(\varphi)$
for every $i\geq 0$.
Since
$u^*_i(\varphi^{\prime})=th(\mathcal T_i)$
it remains to check the relation
$u^*_i(\varphi)= th(\mathcal T(i))$.
It follows from the
\begin{clm}
\label{UandThom}
There is an equality $u_i= th^{\mathrm{MGL}}(\mathcal T(i) \in \mathrm{MGL}^{2i,i}(\Th(\mathcal T(i)))$.
\end{clm}
In fact,
$u^\ast_i(\varphi)= \varphi \circ u_i = \varphi(u_i)=
\varphi(th^{\mathrm{MGL}}(\mathcal T(i)))=
th(\mathcal T(i))$.
The last equality in this chain of relations holds,
because
$\varphi$
is a monoid homomorphism sending
$th^{\mathrm{MGL}}$
to
$th$. It remains to prove the Claim.
We will do this in the case $i=2$. The general case can be proved
similarly. The commutative diagram
\[ \xymatrix@C=5em{
\Sigma^{\infty}_{\Pro^1}\Th(\mathcal T(1))(-1) \wedge
\Sigma^{\infty}_{\Pro^1}\Th(\mathcal T(1))(-1)
\ar[r]^-{\Sigma^{\infty}_{\Pro^1}(\mu_{1,1})(-2)} \ar[d]_-{u_1\wedge u_1} &
\Sigma^{\infty}_{\Pro^1}\Th(\mathcal T(2))(-2)   \ar[d]^-{u_2}   \\
\mathrm{MGL} \wedge \mathrm{MGL}  \ar[r]^-{\mu_{\mathrm{MGL}}} &\mathrm{MGL}}\]
in
$\mathrm{SH}(k)$
implies that
$$
\mu^\ast_{1,1}(u_2)= u_1 \times u_1
\in \mathrm{MGL}^{4,2}(\Th(\mathcal T(1)) \wedge \Th(\mathcal T(1)))=
\mathrm{MGL}^{4,2}(\Th(\mathcal T(1) \times \mathcal T(1))).
$$
The equalities
\begin{eqnarray*}
\mu^\ast_{1,1}(th^{\mathrm{MGL}}(\mathcal T(2))) & = &
th^{\mathrm{MGL}}(\mu^\ast_{1,1}(\mathcal T(2)))=
th^{\mathrm{MGL}}(\mathcal T(1) \times \mathcal T(1)) \\ & =&
th^{\mathrm{MGL}}(\mathcal T(1)) \times th^{\mathrm{MGL}}(\mathcal T(1))
\end{eqnarray*}
imply that it remains to prove the injectivity of the map
$\mu^\ast_{1,1}$. Consider the commutative diagram
\[
\xymatrix{
\mathrm{MGL}^{\ast,\ast}(\Th(\mathcal T(1) \times \mathcal T(1)))&
\mathrm{MGL}^{\ast,\ast}(\Th(\mathcal T(2)))  \ar[l]_-{\mu^\ast_{1,1}}   \\
\mathrm{MGL}^{\ast,\ast}(\Gr(1) \times \Gr(1)) \ar[u]^-{\mathrm{Thom}}_-\cong &   \mathrm{MGL}^{\ast,\ast}(\Gr(2))
\ar[l]_-{\nu^\ast_{1,1}} \ar[u]_-{\mathrm{Thom}}^\cong}\]
where the vertical arrows are the Thom isomorphisms from Theorem
\ref{ThomIsomorphism}
and
$\nu_{1,1}: \Gr(1) \times \Gr(1) \hra \Gr(2)$
is the embedding described by equation~(\ref{eq:111}).
For an oriented commutative $\Pro^1$-ring spectrum
$(E,th)$
one has
$E^{\ast,\ast}(\Gr(2))=E^{\ast,\ast}(k)[[c_1,c_2]]$
(the formal power series on $c_1$, $c_2$)
by Theorem
\ref{CohomologyOfGr}.
From the other hand
$$
E^{\ast,\ast}(\Gr(1) \times \Gr(1))= E^{\ast,\ast}(k)[[t_1,t_2]]
$$
(the formal power series on $t_1$, $t_2$)
by Theorem
\ref{CohomologyOfGrGr}
and the map
$\nu^\ast_{1,1}$
takes
$c_1$ to $t_1+t_2$
and
$c_2$ to
$t_1t_2$.
Whence
$\nu^\ast_{1,1}$
is injective.
The proofs of the Claim and of the Theorem are completed.
\end{proof}


\begin{thebibliography}{MMM}
\label{References}

\bibitem[1]{J}
\emph{J.~F.~Jardine.}
Motivic symmetric spectra.
Doc. Math. 5 (2000), 445--553.

\bibitem[2]{MV}
\emph{F.~Morel, V.~Voevodsky.} $\Aff^1$-homotopy theory of schemes.
Publ.~Math.~IHES 90, (1999), 45--143.

\bibitem[3]{BGL}
\emph{I.~Panin, K.~Pimenov, O.~R\"{o}ndigs.}
On Voevodsky's algebraic $K$-theory spectrum  $\mathrm{BGL}$. 
Preprint, 48 pages (2007). 
Available via {\tt arXiv:0709.3905v1 [math.AG]}

\bibitem[4]{CV}
\emph{I.~Panin, K.~Pimenov, O.~R\"{o}ndigs.}
On the relation of Voevodsky's algebraic cobordism to Quillen's $K$-theory.
Preprint, 18 pages (2007). Available via
{\tt arXiv:0709.4124v1 [math.AG]}

\bibitem[5]{PSorcoh}
\emph{I.~Panin.}  ({\it After I.~Panin and A.~Smirnov})
Oriented cohomology theories on algebraic varieties,
Special issue in honor of H. Bass on his seventieth birthday. Part III.
$K$-Theory, 30 (2003), no. 3, 265--314.


\bibitem[6]{PY}
\emph{I.~Panin, S.~Yagunov.} Rigidity for orientable functors. J.
Pure and Appl. Algebra, 172, (2002), 49--77.

\bibitem[7]{Quillen:mu}
\emph{D.~Quillen}.
On the formal group laws of unoriented and complex cobordism theory.
Bull.~Am.~Math.~Soc. 75, 1293-1298 (1969)


\bibitem[8]{Vez}
\emph{G.~Vezzozi.}
Brown-Peterson spectra in stable $\Aff^1$-homotopy theory.
Rend. Semin. Mat. Univ. Padova 106, 47-64 (2001).

\bibitem[9]{V1}
\emph{V.~Voevodsky.} $\mathbf{A}^1$-Homotopy theory.
Doc. Math., Extra Vol. ICM 1998(I), 417-442.

\end{thebibliography}
\end{document}